\theoremstyle{plain}      
\newtheorem{thm}{Theorem}[section]
\newtheorem{lemma}[thm]{Lemma}     
\newtheorem{prop}[thm]{Proposition}
\theoremstyle{definition}      
\newtheorem{definition}[thm]{Definition}  
\newtheorem{remark}[thm]{Remark}
\DeclareMathAlphabet{\doba}{U}{msb}{m}{n}
\gdef\mR{\doba{R}}
\def\di{{\rm d}}
\def\vol{{\mathrm{vol}}}
\def\Sim{\mathrm{Sim}}
\newcommand{\definedas}{\mathrel{\raise.095ex\hbox{\rm :}\mkern-5.2mu=}}
\subjclass[2010]{53C18}
\keywords{Conformal geometry, Weyl structure, Gauduchon metric, LCP structure}
\begin{document}   
	
	\title{Adapted metrics on locally conformally product manifolds}

	\begin{abstract}
		We show that the Gauduchon metric $g_0$ of a compact locally conformally product manifold $(M,c,D)$ of dimension greater than $2$ is adapted, in the sense that the Lee form of $D$ with respect to $g_0$ vanishes on the $D$-flat distribution of $M$. We also characterize adapted metrics as critical points of a natural functional defined on the conformal class.
	\end{abstract}
	
	\author{Andrei Moroianu, Mihaela Pilca}

\address{Andrei Moroianu \\ Université Paris-Saclay, CNRS,  Laboratoire de mathématiques d'Orsay, 91405, Orsay, France, and Institute of Mathematics “Simion Stoilow” of the Romanian Academy, 21
Calea Grivitei, 010702 Bucharest, Romania}
\email{andrei.moroianu@math.cnrs.fr}

\address{Mihaela Pilca\\Fakult\"at f\"ur Mathematik\\
Universit\"at Regensburg\\Universit\"atsstr. 31 
D-93040 Regensburg, Germany}
\email{mihaela.pilca@mathematik.uni-regensburg.de}

	\maketitle

	\section{Introduction}
	
	A similarity structure on a compact manifold $M$ is a conformal class $c$ of Riemannian metrics, and a closed, non-exact Weyl connection $D$. Equivalently, a similarity structure can be defined by a Riemannian metric $h$ on the universal cover $\widetilde M$ such that the fundamental group $\pi_1(M)$ consists of homotheties of $h$, not all of them being isometries. 
	
	The correspondence between the two definitions goes roughly as follows. If $D$ is a closed, non-exact, Weyl connection on $(M,c)$, its lift $\widetilde D$ to the universal cover $(\widetilde M,\widetilde c)$ is exact, so it is the Levi-Civita connection of some Riemannian metric $h\in \widetilde c$. Each element of $\pi_1(M)$ is affine ({\em i.e.} preserves $\widetilde D=\nabla^h$), so is a homothety of $h$. Moreover, not all elements in $\pi_1(M)$ are isometries of $h$, since otherwise $D$ would be exact. The converse is similar.
	
	A locally conformally product (LCP) structure on a compact manifold $M$ is a similarity structure $(c,D)$ such that $D$ has reducible holonomy representation (or, equivalently, such that the corresponding metric $h$ on $\widetilde M$ is reducible). In general we will tacitly assume that the metric $h$ of an LCP structure is non-flat. The flat case is of completely different nature, and was classified in \cite{fried}.
		
	The theory of LCP manifolds was founded by the seminal work of M. Kourganoff \cite{k}, where it is proved, among many other results, that if $(\widetilde M,h)$ is non-flat, then it has a global de Rham decomposition $(\widetilde M,h)=\mR^q\times (N,g_N)$, where $\mR^q$ is a flat Euclidean space with $q\ge 1$ and $(N,g_N)$ is an irreducible, incomplete Riemannian manifold. This result is highly non-trivial, and does not follow from the classical de Rham theorem, since $(\widetilde M, h)$ is incomplete. Moreover, it is noteworthy that, unlike the classical case of reducible complete simply connected Riemannian manifolds, where the number of factors is arbitrary, the universal cover of LCP manifolds has always exactly two factors.
	
	Since the fundamental group preserves the above decomposition of $\widetilde M$, the distributions $\mathrm{T}\mR^q$ and $\mathrm{T}N$ define orthogonal $D$-parallel distributions $T_0$ and $T_1$ on $M$. We will call $T_0$ the flat distribution and $T_1$ the non-flat distribution of $M$. 
	
	For every metric $g\in c$ on $M$, the Lee form of $D$ with respect to $g$ will be denoted by $\theta_g$ (or simply $\theta$ when there is no risk of confusion). By definition, $\theta_g$ is closed and non-exact. The metric $g$ is called Gauduchon if $\theta_g$ is co-closed, \emph{i.e.} $\delta^g\theta_g=0$, where $\delta^g$ denotes the codifferential with respect to $g$. By \cite{g}, Gauduchon metrics always exist, and are unique up to constant rescaling. 
	
	On LCP manifolds $(M,c,D)$, there is another class of distinguished metrics, introduced by B. Flamencourt \cite{brice}. A metric $g\in c$ is called {\em adapted} if $\theta_g$ vanishes on the flat distribution $T_0$. Using a convolution and smoothing argument, Flamencourt showed that every LCP manifold carries adapted metrics \cite[Prop. 3.6]{brice}. 
	
	Our first main result is to show that in fact the Gauduchon metric of every LCP structure $(M,c,D)$ is adapted. In particular, this implies that every LCP manifold carries adapted metrics. The proof is based on an averaging argument, using several deep results by Kourganoff concerning the structure of the group of similarities of $\widetilde M$. 
	
	In the last section we show that every critical point of the functional  $\mathcal F$ which to any metric $g$ in the conformal class $c$ associates the integral of the restriction to $T_0$ of the Lee form $\theta_g$ is an adapted metric, and thus a global minimum of $\mathcal F$. This result was inspired by some similar statements for LCK metrics obtained in \cite{aiot}.
	
	{\bf Acknowledgments.} This work was supported by the Procope Project No. 57650868 (Germany) /  48959TL (France) and by the PNRR Project CF149/31.07.2023.

	\section{Preliminaries}
	
	\subsection{Weyl connections}
		A {\it Weyl structure} on a conformal manifold $(M,c)$ is a torsion-free linear connection $D$ which preserves the conformal class $c$. 
	The condition that $D$ preserves the conformal class $c$ means that for each metric $g\in c$, there exists a unique $1$-form $\theta_g\in\Omega^1(M)$, called the {\it Lee form} of $D$ with respect to $g$, such that 
	\begin{equation}\label{dg}Dg=-2\theta_g\otimes g.
	\end{equation}
	The Weyl connection $D$ is then related to the Levi-Civita covariant derivative $\nabla^g$ by
	\begin{equation}\label{weylstr}
		D_X=\nabla^g_X+\theta_g(X)\mathrm{Id} + \theta_g\wedge X, \quad \forall X\in  \mathrm{T}M,
	\end{equation}
	where $\theta_g\wedge X$ is the skew-symmetric endomorphism of $\mathrm{T}M$ defined by $$(\theta_g\wedge X)(Y):=\theta_g(Y) X-g(X,Y)(\theta_g)^\sharp.$$
	
	A Weyl connection $D$ is called {\it closed} if it is locally the Levi-Civita connection of a (local) metric in $c$ and is called {\it exact} if it is the Levi-Civita connection of a globally defined metric in~$c$. Equivalently, $D$ is closed (resp. exact) if its Lee form with respect to one (and hence to any) metric in $c$ is  closed (resp. exact).	If the manifold $M$ is compact of dimension greater than $2$, then for every Weyl connection~$D$ on $(M,c)$ there exists a unique (up to constant rescaling) metric $g_0\in c$, called the {\it Gauduchon metric} of $D$, such that its associated Lee form $\theta_0$ is coclosed with respect to $g_0$, cf.~\cite{g}.
	
	Let us recall the formula for the conformal change of the codifferential and the Laplacian, which will be needed in the sequel. If two metrics on $M$ are conformally related, $h=e^{2\varphi}g$, then their codifferentials and Laplacians are related as follows (\cite[Thm. 1.159]{besse}):
	\begin{equation}\label{codiffconf}
		\delta^h =e^{-2\varphi}\delta^g + (2-n)e^{-2\varphi} \iota_{\theta^{\#_g}}.
	\end{equation}
	\begin{equation}\label{laplacianconf}
	\Delta^h f=e^{-2\varphi}(\Delta^g f+(2-n)g(\di \varphi,\di f)), \quad \forall f\in\mathcal{C}^\infty(M).
	\end{equation}

	\subsection{LCP structures}
	
	Let $(M,c)$ be a compact conformal manifold. As explained in the introduction, an LCP structure on $(M,c)$ is a closed, non-exact Weyl connection $D$ with reducible holonomy. If $(M, c, D)$ is a compact LCP manifold, its universal cover $(\widetilde M, h)$ is endowed with a Riemannian metric $h$, determined up to a multiplicative factor by the fact that its Levi-Civita connection is the pull-back of $D$ to $\widetilde M$. 
	
	By \cite[Thm. 1.5]{k}, $(\widetilde M, h)$ is globally isometric to a Riemannian product $\mR^q\times (N,g_N)$, where $\mR^q$ is the flat Euclidean space and $(N,g_N)$ is a non-flat, incomplete and irreducible Riemannian manifold.  The fundamental group of $M$ acts on $\widetilde M$ by homotheties, and we denote by $\rho$ the corresponding representation:  $$\rho\colon \pi_1(M)\to\mR_+^*, \quad \alpha^*h= \rho(\alpha)h, \quad \text{ for all } \alpha\in\pi_1(M),$$
	 so that $\rho(\alpha)^{1/2}$ is the similarity ratio of $\alpha\in \pi_1(M)$. 
	 
	 For any real number $t$, one may define the representation $\rho^t\colon \pi_1(M)\to\mR_+^*$ by setting  $\rho^t(\alpha):=(\rho(\alpha))^t$, for all $\alpha\in \pi_1(M)$. 
	A function $f\in\mathcal{C}^\infty(\widetilde M)$ is called $\rho^t$-equivariant, if  $\alpha^*f= \rho^t(\alpha)f$, for all $\alpha\in \pi_1(M)$. Let us denote by $\mathcal{C}^\infty_{\rho^t}(\widetilde M)$ the set of all $\rho^t$-equivariant functions.
	
	With respect to the decomposition $\widetilde M=\mR^q\times N$, every element of $\pi_1(M)$ can be written as $\alpha=(\alpha_0,\alpha_1)$, with $\alpha_0\in\Sim(\mR^q)$ and $\alpha_1\in\Sim(N,g_N)$, where $\Sim(N,g_N)$ denotes the group of similarities of $(N,g_N)$, namely:
	$$\Sim(N,g_N):=\{\phi\colon N\to N \, |\, \phi \text{ is a diffeomorphism} \text{ and } \exists \lambda>0 \text{ s.t. } \phi^* g_N=\lambda^2 g_N \}.$$
	 Following \cite[\S4]{k}, we introduce the group obtained as the projection of $\pi_1(M)$ onto the subgroup of homotheties of $(N,g_N)$:
	$$P:=\{\alpha_1\in\Sim(N,g_N)\ |\ \exists\alpha_0\in \Sim(\mR^q),\ (\alpha_0,\alpha_1)\in\pi_1(M)\}.
	$$
	We denote by $\overline P$ its closure in $\Sim(N,g_N)$ and by $\overline P^0$ the connected component of the identity in $\overline P$. By \cite[Lemmas 4.1 and  4.13]{k}, $\overline P_0$ is an abelian subgroup of $\mathrm{Isom}(N,g_N)$.
	 
	 The subgroup  $\Gamma_0:=\pi_1(M)\cap (\mathrm{Sim}(\mR^q)\times \overline{P}^{0})$ of  $\pi_1(M)$ is a full lattice in $\widetilde G:=\mR^q\times \overline{P}^{0}$, by \cite[Lemma 4.18]{k}. Moreover, $\Gamma_0\subset \mathrm{Isom}(\widetilde M, h)$ since $\overline P_0\subset\mathrm{Isom}(N,g_N)$. Let $G$ denote the compact quotient $G:=\widetilde{G}/\Gamma_0$.

	\begin{remark}
		Let us observe that for any $t\in\mR$, every function in $\mathcal{C}^\infty_{\rho^t}(\widetilde M)$  is $\Gamma_0$-invariant. This follows directly from the fact that $\Gamma_0$ acts by isometries with respect to the metric $h$, \emph{i.e.} $\rho(\gamma_0)=1$, for all $\gamma_0\in \Gamma_0$. Hence, we have for any $f\in \mathcal{C}^\infty_{\rho^t}(\widetilde M)$ and $\gamma_0\in \Gamma_0$: $\gamma_0^* f =(\rho(\gamma_0))^t f=f$. Thus, the compact group $G$  naturally  acts on the space of  $\rho^t$-equivariant functions on $\widetilde M$:
		\[\overline\gamma^*f:=\gamma^*f, \quad \text{ for all } \overline\gamma\in G \text{ and }  f\in  \mathcal{C}^\infty_{\rho^t}(\widetilde M),\]
		where $\gamma$ projects onto $\overline\gamma$ via the canonical surjection from $\widetilde G$ to $G=\widetilde G/\Gamma_0$.
		\end{remark}
		
		\section{An averaging argument}
	
	Let $\mathrm{vol}^G$ denote the left  invariant  Haar measure of the compact group $G$ defined in the previous section (or equivalently the bi-invariant Haar measure, because $G$ is abelian), which is uniquely defined up to a scalar multiple.
	The above remark allows us to associate to every $\rho^t$-equivariant function $f$ on $\widetilde M$ another function  on $\widetilde M$, denoted by $f^G$, as follows:
	\begin{equation}\label{deff^G}
	 f^G (x):=\int_G (\overline\gamma^* f)(x)\,\mathrm{vol}^G,\qquad \text{ for all } x\in \widetilde M.
	 \end{equation}
	
	For $\alpha\in\pi_1(M)$, let $\varphi_\alpha$ denote the conjugation with $\alpha$ acting on $G$:
	$$\varphi_\alpha\colon G\to G, \quad \varphi_\alpha(\overline\gamma):=\alpha^{-1}\overline{\gamma}\alpha.$$
	For $\overline\gamma\in G$, let $L_{\overline\gamma}$ denote the left multiplication with $\overline\gamma$. Then the following equality holds:
	\begin{equation}\label{leftconj}
	L_{\varphi_\alpha(\overline\gamma)}=\varphi_\alpha L_{\overline\gamma}\varphi_\alpha^{-1}, \quad\forall \overline\gamma\in G, \alpha\in\pi_1(M).
	\end{equation}
Indeed, for all $\overline\gamma'\in G$ we compute: 
\begin{equation*}
\begin{split}\varphi_\alpha L_{\overline\gamma}\varphi_\alpha^{-1}(\overline\gamma')&=\varphi_\alpha L_{\overline\gamma}(\alpha\overline{\gamma}'\alpha^{-1})=\varphi_\alpha(\overline\gamma\alpha\overline{\gamma}'\alpha^{-1})=\alpha^{-1}(\overline\gamma\alpha\overline{\gamma}'\alpha^{-1})\alpha\\
&=(\alpha^{-1}\overline\gamma\alpha)\overline{\gamma}'=L_{\varphi_\alpha(\overline\gamma)}(\overline{\gamma}').
\end{split}
\end{equation*}

	\begin{remark}
		 For all $\alpha\in\pi_1(M)$, the following equality holds:
		\begin{equation}\label{haarinv}
		\varphi_\alpha^*\mathrm{vol}^G =\mathrm{vol}^G.
		\end{equation}
	This can be checked as follows. We first compute:
	\begin{equation*}
		\begin{split}
			(\varphi^{-1}_\alpha)^*L_{\overline\gamma}^*\varphi_\alpha^*\mathrm{vol}^G=(\varphi_\alpha L_{\overline\gamma} \varphi^{-1}_\alpha)^*\mathrm{vol}^G\overset{\eqref{leftconj}}{=}L^*_{\varphi_\alpha(\overline\gamma)}\mathrm{vol}^G=\mathrm{vol}^G,
		\end{split}	
		\end{equation*}	
where the last equality follows from the fact that the Haar measure $\mathrm{vol}^G$ is by definition left invariant and $\varphi_\alpha(\overline\gamma)\in G$. Hence, for all $\overline\gamma\in G$, we have that $L_{\overline\gamma}^*\varphi_\alpha^*\mathrm{vol}^G=\varphi_\alpha^*\mathrm{vol}^G$, \emph{i.e.} $\varphi_\alpha^*\mathrm{vol}^G$ is also a left invariant Haar measure. Thus, there exists a constant $c_\alpha\in\mR\setminus\{0\}$, such that $\varphi_\alpha^*\mathrm{vol}^G=c_\alpha \mathrm{vol}^G$. Since $\varphi_\alpha$ is a diffeomorphism of the compact group $G$, and $ \mathrm{vol}^G$ is a positive measure, the constant $c_\alpha$ must be equal to $1$.
	\end{remark}	
	
	\begin{lemma}\label{f^G}
		For any function $f\in\mathcal{C}^\infty_{\rho^t}(\widetilde M)$, the function $f^G\in\mathcal{C}^\infty(\widetilde M)$ is ${\rho^t}$-equivariant and $\widetilde G$-invariant. Moreover, if $f$ is a positive function, then $f^G$ is positive too.
	\end{lemma}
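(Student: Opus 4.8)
The plan is to check the four assertions—smoothness, $\rho^t$-equivariance, $\widetilde G$-invariance, and positivity—separately, noting that all the structural input (the commutativity of $\widetilde G$, the lattice property of $\Gamma_0$, and the conjugation-invariance \eqref{haarinv} of $\mathrm{vol}^G$) has already been established above, so that only a careful bookkeeping of the two group actions remains. Smoothness of $f^G$ would follow from the joint smoothness of $(\overline\gamma,x)\mapsto(\overline\gamma^* f)(x)=f(\gamma\cdot x)$ on $G\times\widetilde M$ together with differentiation under the integral sign, which is legitimate because $G$ is compact and $\mathrm{vol}^G$ is finite. Positivity is immediate: if $f>0$ everywhere, then each integrand $(\overline\gamma^* f)(x)$ is positive, hence so is its integral against the positive measure $\mathrm{vol}^G$.

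For the $\rho^t$-equivariance I would fix $\alpha\in\pi_1(M)$ and use the tautological identity $\gamma\alpha=\alpha\,\varphi_\alpha(\gamma)$. Writing $(\alpha^* f^G)(x)=\int_G f(\gamma\cdot\alpha x)\,\mathrm{vol}^G$, replacing $\gamma\cdot\alpha x$ by $\alpha\cdot(\varphi_\alpha(\gamma)\cdot x)$ and extracting the factor $\rho^t(\alpha)$ via the equivariance of $f$ gives $(\alpha^* f^G)(x)=\rho^t(\alpha)\int_G f(\varphi_\alpha(\gamma)\cdot x)\,\mathrm{vol}^G$. The remaining integral is $\int_G(F\circ\varphi_\alpha)\,\mathrm{vol}^G$ with $F(\overline\delta):=f(\delta\cdot x)$, which equals $\int_G F\,\mathrm{vol}^G=f^G(x)$ precisely by the change-of-variables formula together with \eqref{haarinv}; hence $\alpha^* f^G=\rho^t(\alpha)f^G$. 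This invocation of the conjugation-invariance of the Haar measure is the one genuinely substantial step, and the place where I expect the only real subtlety to lie.

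For the $\widetilde G$-invariance I would fix $\gamma_1\in\widetilde G$ with image $\overline\gamma_1\in G$ and, using that the projection $\widetilde G\to G$ is a homomorphism, write $(\gamma_1^* f^G)(x)=\int_G f((\gamma\gamma_1)\cdot x)\,\mathrm{vol}^G(\overline\gamma)$; the substitution $\overline\gamma\mapsto\overline\gamma\,\overline\gamma_1^{-1}$ then leaves this unchanged by the right-invariance (equivalently bi-invariance, as $G$ is abelian) of $\mathrm{vol}^G$, yielding $\gamma_1^* f^G=f^G$. Along the way I would record that $\overline\gamma^* f$ is genuinely well-defined on $G$: if $\gamma'=\gamma\gamma_0$ with $\gamma_0\in\Gamma_0$, then the commutativity of $\widetilde G$ and $\gamma_0^* f=f$ (from the preceding remark) give $(\gamma')^* f=\gamma^* f$, so the integrand depends only on $\overline\gamma$. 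Collecting the four verifications completes the proof.
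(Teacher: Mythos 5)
Your proposal is correct and follows essentially the same route as the paper: $\rho^t$-equivariance via the identity $\gamma\alpha=\alpha\varphi_\alpha(\gamma)$ combined with the conjugation-invariance \eqref{haarinv} of $\mathrm{vol}^G$, $\widetilde G$-invariance via translation-invariance of the Haar measure, and positivity by integrating positive functions. The well-definedness and smoothness remarks you add are harmless supplements (the former is already contained in the paper's preceding remark).
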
	

\begin{proof}
	Let $\alpha\in \pi_1(M)$. We compute for all $x\in \widetilde M$:
	\begin{equation*}
		\begin{split}
			(\alpha^* f^G)(x)&=f^G(\alpha x)=\int_G (\overline\gamma^* f)(\alpha x)\,\mathrm{vol}^G=\int_G (\alpha^*\overline\gamma^* f)( x)\,\mathrm{vol}^G\\
			&=\int_G (\alpha^{-1}\overline\gamma\alpha)^*(\alpha^* f)( x)\,\mathrm{vol}^G=\int_G (\alpha^{-1}\overline\gamma\alpha)^*(\rho^t(\alpha) f)( x)\,\mathrm{vol}^G\\
			&\overset{\eqref{haarinv}}{=}\rho^t(\alpha)  \int_G (\varphi_\alpha(\overline\gamma))^*f( x)\,\mathrm{vol}^G=\rho^t(\alpha)  \int_G(\overline{\gamma}^* f)( x)\,\mathrm{vol}^G=\rho^t(\alpha) f^G(x),
		\end{split}	
	\end{equation*}	
which shows that  the function $f^G$ is ${\rho^t}$-equivariant. 
The fact that $f^G$ is $\widetilde G$-invariant follows directly from its definition, since for every $\beta\in\widetilde G$ and $x\in\widetilde M$ we obtain:
\begin{equation*}
		(\beta^* f^G)(x)=\int_G (\overline\beta^*\overline\gamma^* f)( x)\,\mathrm{vol}^G=\int_G (\overline\gamma\overline\beta)^*f\, ( x)\,\mathrm{vol}^G=\int_G (\overline\gamma)^*f\,( x)\,\mathrm{vol}^G= f^G(x).
\end{equation*}	
If $f$ is positive, then $f^G$ is positive, since it is defined as an integral of positive functions.
\end{proof}

\section{The Gauduchon metric of an LCP manifold}	

The purpose of this section is to show that the Gauduchon metric of a compact LCP manifold is adapted.

	\begin{definition} \label{defadapted}
	A metric $g\in c$ on a locally conformally product manifold $(M, c, D)$ is called {\it adapted} if  its Lee form $\theta_g$ vanishes on the flat distribution $T_0$ of $M$, or, equivalently, if its pull-back $\widetilde \theta_g$ vanishes on the flat part $\mR^q$ of the de Rham decomposition of $(\widetilde M, h)$. 
\end{definition}

\begin{remark}\label{remadapted}
	A metric $g\in c$ is adapted if  and only if the function $\varphi$ on $\widetilde M$, which is defined by the equality $h=e^{2\varphi}g$, is a function on $N$.
\end{remark}
	
In the sequel the following characterization of Gauduchon metrics will be needed:

\begin{lemma}\label{charactgauduchon}
Let $(M, c, D)$ be a compact LCP manifold of dimension $n>2$ and let $(\widetilde M, h)$ be its universal cover endowed with the similarity structure $h$. Let $g$ be a metric in the conformal class $c$ and denote by  $\varphi$  the function  on $\widetilde M$ defined by $h=e^{2\varphi}g$. Then the following two statements are equivalent: 
\begin{enumerate}
	\item The metric $g$ is the Gauduchon metric of $(M,c, D)$. 
	\item The function $e^{(2-n)\varphi}$ is harmonic with respect to the metric $h$.
	\end{enumerate}
\end{lemma}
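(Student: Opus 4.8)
The plan is to transport the Gauduchon condition, which lives on $M$, to a scalar PDE for $\varphi$ on the universal cover $\widetilde M$, and then to recognize that PDE, via the conformal change formula \eqref{laplacianconf}, as the harmonicity of $e^{(2-n)\varphi}$.

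First I would express the pullback $\widetilde\theta_g$ of the Lee form $\theta_g$ to $\widetilde M$ in terms of $\varphi$. Starting from the defining relation \eqref{dg} and differentiating $h=e^{2\varphi}g$ with $D$ (recalling that $D$ acts as $\di$ on functions), one obtains the standard conformal transformation rule of the Lee form, $\theta_h=\theta_g-\di\varphi$. On $\widetilde M$ the connection $\widetilde D$ is the Levi-Civita connection of $h$, so the Lee form of $\widetilde D$ relative to $h$ vanishes, whence $\widetilde\theta_g=\di\varphi$. Since the covering projection is a local isometry and both $g$ and $\theta_g$ are pullbacks from $M$, the codifferential commutes with the projection, so the Gauduchon condition $\delta^g\theta_g=0$ on $M$ is equivalent to $\delta^g\di\varphi=\Delta^g\varphi=0$ on $\widetilde M$. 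In other words, statement (1) holds if and only if $\varphi$ is $g$-harmonic on $\widetilde M$.

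It then remains to compute $\Delta^h(e^{(2-n)\varphi})$ and to see that it is a nowhere-vanishing multiple of $\Delta^g\varphi$. Writing $u=e^{(2-n)\varphi}$ and using the Leibniz rule $\delta^g(f\omega)=f\delta^g\omega-g(\di f,\omega)$, a direct computation gives
\[
\Delta^g u=(2-n)e^{(2-n)\varphi}\bigl(\Delta^g\varphi-(2-n)|\di\varphi|_g^2\bigr).
\]
Substituting this, together with $g(\di\varphi,\di u)=(2-n)e^{(2-n)\varphi}|\di\varphi|_g^2$, into the conformal change formula \eqref{laplacianconf}, the two terms proportional to $|\di\varphi|_g^2$ cancel and one is left with
\[
\Delta^h\bigl(e^{(2-n)\varphi}\bigr)=(2-n)\,e^{-n\varphi}\,\Delta^g\varphi.
\]
Because $n>2$ and $e^{-n\varphi}>0$, the left-hand side vanishes precisely when $\Delta^g\varphi=0$, which by the previous step is equivalent to $g$ being the Gauduchon metric. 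This proves the equivalence $(1)\Leftrightarrow(2)$.

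The computation is entirely routine, so I do not expect a genuine obstacle; the points requiring care are only bookkeeping. One must fix the sign conventions for $\delta^g$ and $\Delta^g$ once and for all (matching the one under which \eqref{laplacianconf} is stated) and use them consistently in the Leibniz rule. One must also note that, although neither $h$ nor $\varphi$ descends to $M$, the quantities actually manipulated — the one-form $\di\varphi=\widetilde\theta_g$ and the scalar equation $\Delta^g\varphi=0$ — are honest pullbacks from $M$, so harmonicity on $\widetilde M$ does faithfully encode the Gauduchon property on $M$. The only structurally meaningful feature of the argument is the cancellation of the $|\di\varphi|_g^2$ terms, which is exactly what singles out the exponent $2-n$.
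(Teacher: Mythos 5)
Your proof is correct and follows essentially the same route as the paper: both reduce the Gauduchon condition to $\Delta^g\varphi=0$ via $\widetilde\theta_g=\di\varphi$ and then combine the conformal change formula \eqref{laplacianconf} with the identity $\Delta(e^{(2-n)\varphi})=(2-n)e^{(2-n)\varphi}(\Delta\varphi-(2-n)|\di\varphi|^2)$; you merely apply \eqref{laplacianconf} to $e^{(2-n)\varphi}$ instead of to $\varphi$, which is an immaterial reordering of the same computation.
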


\begin{proof}
	Let $g\in c$. By definition, $g$ is the Gauduchon metric of $(M,c, D)$ if its Lee form $\theta_g$ is $g$-coclosed: $\delta^g\theta_g=0$. Since $\widetilde\theta_g=\di\varphi$, this is equivalent to $\Delta^g \varphi=0$. According to the formula~\eqref{laplacianconf} applied to the function $\varphi$, the equality $\Delta^g \varphi=0$ is equivalent to $ \Delta^h \varphi= (2-n) e^{-2\varphi}|\di\varphi|^2_g$, 
	which can also be written as
	$$\Delta^h \varphi= (2-n) |\di\varphi|^2_h.$$ 
	Furthermore, this equality is equivalent to $\Delta^h (e^{(2-n)\varphi})=0$, as the following computation shows:
	$$ \Delta^h (e^{(2-n)\varphi})=\delta^h \di  (e^{(2-n)\varphi})=(2-n)\delta^h (e^{(2-n)\varphi}\di \varphi)=(2-n)e^{(2-n)\varphi}(\Delta^h \varphi-(2-n)|\di\varphi|^2_h).$$
	Hence, we have proved that $g$ is the Gauduchon metric if and only if  $\Delta^h (e^{(2-n)\varphi})=0$, \emph{i.e.} if and only if $e^{(2-n)\varphi}$ is harmonic with respect to the metric $h$.
\end{proof}	
	
We have now all ingredients necessary for the main result of this section.

	\begin{thm}
		The Gauduchon metric of a compact locally conformally product manifold $(M, c, D)$ of dimension $n>2$ is adapted.
	\end{thm}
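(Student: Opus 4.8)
The plan is to let $g_0$ be the Gauduchon metric, write $h=e^{2\varphi}g_0$ on $\widetilde M$, and prove that $\varphi$ depends only on the $N$-variable, which by Remark~\ref{remadapted} is exactly the statement that $g_0$ is adapted. First I would record the equivariance of $\varphi$ under the deck group. Since $g_0$ descends to $M$, its pull-back $\widetilde g_0=e^{-2\varphi}h$ is $\pi_1(M)$-invariant, while $\alpha^*h=\rho(\alpha)h$; comparing the two and using $\alpha^*\widetilde g_0=\widetilde g_0$ yields $\alpha^*\varphi=\varphi+\tfrac12\log\rho(\alpha)$ for all $\alpha\in\pi_1(M)$. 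Consequently the positive function $u:=e^{(2-n)\varphi}$ satisfies $\alpha^*u=\rho(\alpha)^{(2-n)/2}u$, i.e. $u\in\mathcal{C}^\infty_{\rho^t}(\widetilde M)$ with $t=(2-n)/2$; and by Lemma~\ref{charactgauduchon} it is $h$-harmonic.

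Next I would average $u$ over $G$. By Lemma~\ref{f^G}, the function $u^G$ is again positive and $\rho^t$-equivariant, and is in addition $\widetilde G$-invariant. The extra ingredient I need is that $u^G$ stays $h$-harmonic. This is where the structure of $\widetilde G$ enters: it acts on $\widetilde M=\mR^q\times N$ by translations on the flat factor $\mR^q$ and by the isometries $\overline P^0\subset\mathrm{Isom}(N,g_N)$ on the second factor, hence by $h$-isometries. Therefore $\Delta^h$ commutes with every pull-back $\overline\gamma^*$, and since $\Delta^h u=0$, differentiating under the integral sign in \eqref{deff^G} gives $\Delta^h u^G=0$.

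I would then reconstruct a metric from $u^G$. Set $\varphi^G:=\tfrac{1}{2-n}\log u^G$ and $g^G:=e^{-2\varphi^G}h$. The equivariance of $u^G$ translates into $\alpha^*\varphi^G=\varphi^G+\tfrac12\log\rho(\alpha)$, the very condition which makes $g^G$ invariant under $\pi_1(M)$, so $g^G$ descends to a metric in $c$. Since $e^{(2-n)\varphi^G}=u^G$ is $h$-harmonic, Lemma~\ref{charactgauduchon} identifies $g^G$ as the Gauduchon metric of $(M,c,D)$.

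Finally, by the uniqueness of the Gauduchon metric up to a positive constant, $g^G=\lambda g_0$, so $\varphi$ and $\varphi^G$ differ only by a constant. But $u^G$ is $\widetilde G$-invariant, in particular invariant under the $\mR^q$-translations, so $\varphi^G$---and hence $\varphi$---is a function of the $N$-variable alone; Remark~\ref{remadapted} then gives that $g_0$ is adapted. The main obstacle I anticipate is the harmonicity of the average $u^G$: one must ensure the averaging group acts isometrically so that averaging commutes with $\Delta^h$, and then leverage the uniqueness of the Gauduchon metric to upgrade the $\mR^q$-invariance of $u^G$ into the $\mR^q$-invariance of the original $\varphi$.
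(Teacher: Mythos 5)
Your proposal is correct and follows essentially the same route as the paper's own proof: characterize the Gauduchon condition as $h$-harmonicity of $e^{(2-n)\varphi}$, average this function over the compact group $G$ (whose lift $\widetilde G$ acts by $h$-isometries, so harmonicity is preserved), and use uniqueness of the Gauduchon metric to conclude that $\varphi$ is $\mR^q$-invariant. All the key steps you identify --- the $\rho^{(2-n)/2}$-equivariance, the preservation of harmonicity under averaging, and the final uniqueness argument --- match the paper's argument.
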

	
	\begin{proof}
Let $g_0$ be the Gauduchon metric of $(M, c, D)$ and let $\varphi_0$ be the function  defined on $\widetilde M$ by $h=e^{2\varphi_0}g_0$, where we denote by $g_0$ also the pull-back of the Gauduchon metric to the universal cover. According to Lemma~\ref{charactgauduchon}, we know that 
the positive function $f_0\in \mathcal{C}^\infty(\widetilde M)$ defined by $f_0:=e^{(2-n)\varphi_0}$  is a harmonic function with respect to the metric $h$. Moreover, because $e^{2\varphi_0}$ is $\rho$-equivariant, it follows that $f_0$ is $\rho^{\frac{2-n}{2}}$-equivariant. Applying Lemma~\ref{f^G} to $f_0$ yields that the function $f_0^G$ defined by \eqref{deff^G} is positive, $\rho^{\frac{2-n}{2}}$-equivariant and $\widetilde G$-invariant. Because $\widetilde G$ acts by isometries with respect to the metric $h$, the Laplacian $\Delta^h$ commutes with the pull-back defined by all $\gamma\in\widetilde G$, which implies that $f_0^G$ is still an $h$-harmonic function:
$$ (\Delta^h f_0^G) (x):=\int_G (\Delta^h\overline\gamma^* f_0)(x)\,\mathrm{vol}^G=\int_G (\overline\gamma^*\Delta^h f_0)(x)\,\mathrm{vol}^G=0, \qquad\forall x\in \widetilde M.$$
Since the function $f_0^G$ is positive, we may define the function $\psi:=\frac{1}{2-n}\ln(f_0^G)\in\mathcal{C}^\infty(\widetilde M)$, which thus satisfies 
\begin{equation}\label{psiharmonic}
	\Delta^h (e^{(2-n)\psi})=0.
\end{equation}	
Because $f_0^G$ is $\widetilde G$-invariant, also the function $\psi$ is $\widetilde G$-invariant. Since  the group $\widetilde G=\mR^q\times \overline P^0$ acts by translations on the flat part $\mR^q$ of $\widetilde M=\mR^q\times N$, the fact that $\psi$ is $\widetilde G$-invariant, implies that $\psi$ is a function on $N$.

Since $f_0^G$ is $\rho^{\frac{2-n}{2}}$-equivariant, the function $e^{2\psi}=(f_0^G)^{\frac{2}{2-n}}$ is \mbox{$\rho$-equivariant}. Hence, the metric $g:=e^{-2\psi}h$ is the pull-back of a metric on $M$, which we still denote by $g$ and, by definition, $g\in c$. The fact that $\psi$ is a function on $N$ means that the metric $g$ is adapted, according to Remark~\ref{remadapted}. Applying Lemma~\ref{charactgauduchon}, it follows from \eqref{psiharmonic}, that the metric $g$ is a Gauduchon metric of $(M, c, D)$ and thus, it must coincide, up to a scalar multiple, with $g_0$. This proves that the Gauduchon metric $g_0$ is adapted.
	\end{proof}

\section{Adapted Metrics as Critical Points of a Functional}
In this section we show that adapted metrics on a compact LCP manifold can be characteri\-zed as critical points of a naturally defined functional. 

We first set the notation. Let $(M, c, D)$  be a compact LCP manifold and let $\mathcal{M}_1$ denote the space of metrics in the conformal class $c$ of total volume equal to $1$:
$$\mathcal{M}_1:=\left\{ g\in c \, \Big|\,  \int_M \mathrm{vol}_g=1\right\}.$$

For every $1$-form $\omega$ on $M$, we denote by $\omega_0$ its restriction to the projection of the flat distribution $T_0$ on $M$. 
In particular, if $\theta$ is the Lee form of $D$ with respect to $g$, we denote by $\theta_0$ the 1-form determined in this way. The functional we consider is then defined as follows:
$$\mathcal{F}\colon \mathcal{M}_1\to \mR, \quad  \mathcal{F}(g):=\int_M|\theta_0|^2_g\mathrm{vol}_g.$$

\begin{prop}
	A metric $g\in  \mathcal{M}_1$ is adapted if and only if $g$ is a critical point of the functional $\mathcal{F}$.
\end{prop}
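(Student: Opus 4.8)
The plan is to compute the first variation of $\mathcal{F}$ along a path of metrics in $\mathcal{M}_1$ and show that the critical point equation is equivalent to $\theta_0 = 0$, i.e.\ to $g$ being adapted. Since all metrics in $c$ are conformally related, I would parametrize a variation of $g\in\mathcal{M}_1$ by a function: set $g_s = e^{2s u}g$ for a smooth function $u$ on $M$, where $u$ is constrained so that the path stays in $\mathcal{M}_1$ (to first order this forces $\int_M u\,\mathrm{vol}_g = 0$, after incorporating the variation of $\mathrm{vol}_{g_s}$). The first task is to record how the relevant objects vary: under $g\mapsto e^{2su}g$, the Lee form transforms by $\theta_{g_s} = \theta_g - s\,\di u + O(s^2)$ (since the Levi-Civita connection changes but $D$ is fixed, the defining relation \eqref{dg} gives exactly a shift by the differential of the conformal factor), the pointwise norm satisfies $|\omega|^2_{g_s} = e^{-2su}|\omega|^2_g$, and $\mathrm{vol}_{g_s} = e^{nsu}\mathrm{vol}_g$.

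\textbf{Computing the first variation.}
The subtle point is the behaviour of the \emph{restriction} $\theta_0$. By definition $\theta_0$ is obtained by restricting $\theta_g$ to the (projected) flat distribution $T_0$; equivalently $|\theta_0|^2_g = g(\theta_g^{\sharp_g}, P_0\theta_g^{\sharp_g})$ where $P_0$ denotes $g$-orthogonal projection onto $T_0$. Here I expect the main structural input: the distribution $T_0$ is $D$-parallel and \emph{conformally invariant} as a subbundle of $TM$, so the splitting $TM = T_0\oplus T_1$ does not depend on the choice of $g\in c$, only the orthogonality and the metric on each factor rescale conformally. Consequently $\theta_0$ is intrinsically the component of $\theta_g$ along $T_0$, and under the conformal change the correction term $-s\,\di u$ contributes $-s\,(\di u)_0$, the restriction of $\di u$ to $T_0$. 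Assembling the three variations and differentiating at $s=0$, the volume factor $e^{nsu}$ and the norm factor $e^{-2su}$ combine to $e^{(n-2)su}$, so
\begin{equation*}
\frac{d}{ds}\Big|_{s=0}\mathcal{F}(g_s)=\int_M\Big((n-2)u\,|\theta_0|^2_g - 2\,g_0(\theta_0,(\di u)_0)\Big)\mathrm{vol}_g,
\end{equation*}
where $g_0$ denotes the induced metric on $T_0$.

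\textbf{Extracting the Euler--Lagrange equation.}
To read off criticality I would integrate the second term by parts. The pairing $g_0(\theta_0,(\di u)_0)$ equals $g(\theta_0^{\sharp},\di u)$ where $\theta_0^{\sharp}$ is the $T_0$-component of $\theta_g^{\sharp}$, so integration by parts moves the derivative onto $\theta_0^{\sharp}$ and produces a term $u\,\delta^g(\theta_0^{\sharp})$. Imposing the constraint $\int_M u\,\mathrm{vol}_g=0$ and letting $u$ range over all mean-zero functions, $g$ is critical precisely when the integrand is $g$-orthogonal to constants, i.e.\ when $(n-2)|\theta_0|^2_g + 2\,\delta^g(\theta_0^{\sharp})$ is constant. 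The key analytic step is then to show this forces $\theta_0=0$: integrating the Euler--Lagrange quantity against a suitable test function (most naturally against the conformal factor relating $g$ to the Gauduchon metric $g_0$, or simply against $1$ to pin down the constant and then exploiting positivity) should yield $\int_M|\theta_0|^2_g\,\mathrm{vol}_g=0$. I anticipate that the cleanest route is to pair the Euler--Lagrange equation with the harmonicity already established for the Gauduchon representative, or to feed in the previous theorem to identify critical points, thereby concluding $\theta_0\equiv 0$, which is exactly the adapted condition.

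\textbf{Closing the equivalence.}
The reverse implication is immediate: if $g$ is adapted then $\theta_0=0$, so $\mathcal{F}(g)=0$, and since $\mathcal{F}\ge 0$ pointwise this is the global minimum, hence automatically a critical point. The main obstacle I foresee is the correct and invariant treatment of the variation of the \emph{restricted} norm $|\theta_0|^2_g$ — in particular verifying that $T_0$ is conformally fixed and that no extra cross-terms from $T_1$ enter — since a careless computation could introduce spurious contributions from the variation of the orthogonal splitting.
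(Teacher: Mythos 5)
Your setup and first-variation computation match the paper's: you obtain the correct Euler--Lagrange equation, namely that $(n-2)|\theta_0|^2_g - 2\delta^g\theta_0$ is a constant $k$ (the paper gets exactly this, using that $g(\theta_0,(\di u)_0)=g(\theta_0,\di u)$ because $\theta_0$ vanishes on $T_1$, so integration by parts produces $\delta^g\theta_0$ with no cross-terms). The reverse implication is also handled the same way. The genuine gap is in the step you label ``the key analytic step'': you do not prove that the Euler--Lagrange equation forces $\theta_0=0$, you only anticipate that it should follow by testing against a suitable function. Neither of your suggested routes works. Integrating the equation over $M$ against $1$ kills the divergence term and yields $k=(n-2)\int_M|\theta_0|^2_g\,\vol_g=(n-2)\,\mathcal F(g)\ge 0$, which merely identifies the constant and is perfectly consistent with $\theta_0\neq 0$; there is no positivity contradiction. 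Pairing with the Gauduchon conformal factor, or invoking the previous theorem, also does not obviously produce anything, because the equation only involves the $T_0$-component of $\theta$ and is not an elliptic equation for a function on the compact manifold $M$, so no maximum principle is available downstairs.

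The paper's resolution is genuinely different at this point and uses the LCP structure in an essential way: pull the equation back to $\widetilde M=\mR^q\times N$ and restrict to a flat leaf $\mR^q\times\{x\}$. There $\theta_0=(\di\varphi)_0$ becomes the full differential of $\varphi_0^x:=\varphi|_{\mR^q\times\{x\}}$, and the critical equation turns into the honest PDE $2\Delta\varphi_0^x+(n-2)|\di\varphi_0^x|^2+ke^{-2\varphi_0^x}=0$ on $\mR^q$. One then needs the boundedness of $\varphi_0^x$ on $\mR^q$ (this is \cite[Lemma 3.4]{brice}, an input you never invoke), which combined with evaluation near a minimum gives $k\ge 0$, hence $\Delta\varphi_0^x\le 0$; the Liouville property for bounded subharmonic functions on $\mR^q$ then forces $\varphi_0^x$ to be constant, and feeding this back gives $\di\varphi_0^x=0$, i.e.\ $g$ adapted. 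Without this leafwise reduction and the boundedness input, the implication ``critical $\Rightarrow$ adapted'' is not established, so your argument as written is incomplete precisely where the real work lies. (Your discussion of the conformal invariance of the splitting $TM=T_0\oplus T_1$ is correct but is not the difficult point.)
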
	

\begin{proof}
	For an adapted metric $g$, the $1$-form $\theta_0$ vanishes, so $g$ is a minimum point of $\mathcal{F}$.\\ Conversely, let us first deduce the equation satisfied by a critical point $g$ of $\mathcal{F}$. For this, we consider a variation of $g$ in  $\mathcal{M}_1$, for $|t|$ small enough: 
	$$g_t:=f_t^2g,\qquad f_0\equiv 1.$$
	The fact that $g_t\in \mathcal{M}_1$ implies that $\displaystyle\int_M \dot f_0\,\vol_g=0$.
	The Lee form of $g_t$ is given by $\theta^{t}:=\theta_{g_t}=\theta_g-\di \ln f_t$, and thus its component along the flat part is $\theta_0^t=\theta_0-(\di \ln f_t)_0$.
	We then compute: 
	\begin{equation*}
		\begin{split}
			\mathcal{F}(g_t)&=\int_M|\theta^t_0|^2_{g_t}\,\vol_{g_t}=\int_M f_t^{n-2}|\theta^t_0|^2_{g}\,\vol_g=\int_M f_t^{n-2}|\theta_0-(\di \ln f_t)_0|^2_{g}\,\vol_{g}\\
			&=\int_M (1+(n-2)t\dot f_0+o(t))|\theta_0-t(\di\dot f_0)_0 +o(t) |^2_{g}\,\vol_{g}\\
			&=\int_M (1+(n-2)t\dot f_0+o(t))(|\theta_0|_g^2-2tg(\theta_0,(\di \dot f_0)_0)+o(t))\,\vol_{g}\\
			&=\int_M |\theta_0|_g^2\,\vol_{g}+t\int_M \left[(n-2)\dot f_0 |\theta_0|_g^2 -2g(\theta_0,\di \dot f_0)\right]\vol_{g}+o(t)\\
			&=\int_M |\theta_0|_g^2\,\vol_{g}+t\int_M \dot f_0\left[(n-2)|\theta_0|_g^2 -2\delta^g\theta_0\right]\vol_{g}+o(t)
		\end{split}	
	\end{equation*}	
	Hence $g$ is a critical point of $\mathcal{F}$ if $\frac{\di}{\di t}\big|_{t=0} \mathcal{F}(g_t)=0$ for any variation $g_t$ as above, or equivalently, if  $\displaystyle\int_M f\left[(n-2)|\theta_0|_g^2 -2\delta^g\theta_0\right]\vol_{g}=0$, for all $f\in\mathcal{C}^\infty(M)$ with $\displaystyle\int_M f\,\vol_g=0$. Thus, the equation that a critical point $g$  of $\mathcal{F}$ fulfills is the following:
	$$(n-2)|\theta_0|_g^2 -2\delta^g\theta_0=k, \text { for some } k\in\mR.$$
	Taking the pull-back to $\widetilde M$ and using \eqref{codiffconf}, this equation may be written as follows:
	$$2e^{2\varphi}\delta^h\theta_0+(n-2)|\theta_0|^2_g +k=0,$$
	or, equivalently:
$$2\delta^h\theta_0+(n-2)|\theta_0|^2_h +ke^{-2\varphi}=0.$$
Let $x$ be any point in $N$. Since $\theta_0=(\di\varphi)_0$, if  we denote by $\varphi_0^x:=\varphi|_{\mR^q\times\{x\}}$, then we obtain the following equivalent equation on $\mR^q$:
\begin{equation}\label{eqcrit}
	2\Delta\varphi_0^x+(n-2)|\di\varphi_0^x|^2+ke^{-2\varphi_0^x}=0.
\end{equation}	
On the other hand, the function $\varphi$ is equivariant on $\widetilde M$, so the function $\varphi_0^x$ is bounded on $\mR^q$, according to \cite[Lemma 3.4]{brice}. Hence,  applying \eqref{eqcrit} at a minimum point of $\varphi_0^x$, yields that $k\geq 0$. Thus, it follows from \eqref{eqcrit} that $\Delta\varphi_0^x\leq 0$ on $\mR^q$, \emph{i.e.} $\varphi_0^x$ is a subharmonic function. According to the maximum principle for subharmonic functions, since $\varphi_0^x$ is bounded on $\mR^q$, it must be constant, so $\Delta\varphi_0^x= 0$ and then \eqref{eqcrit} further implies that
$$0\leq (n-2)|\di\varphi_0^x|^2=-ke^{-2\varphi_0^x}\leq 0.$$
Hence $|\di\varphi_0^x|=0$,  so $\varphi_0^x$ is constant on $\mR^q$, which shows that $\varphi$ is a function on $N$, \emph{i.e.} $g$ is an adapted metric.
\end{proof}

\end{document}